\documentclass{article}
\usepackage{latexsym,amssymb,amsmath,amsfonts,pictex,enumerate,amsthm,dsfont}
\usepackage{ mathrsfs }
\usepackage{tikz}
\usetikzlibrary{decorations.markings}
\usepackage{scalerel,stackengine}
\usepackage[active]{srcltx}

\textwidth=17.00cm
\textheight=25.00cm
\topmargin=-2.00cm
\oddsidemargin=-0.25cm
\evensidemargin=-0.25cm
\headheight=0.3cm
\headsep=0.5cm
\linespread{1.5}

\theoremstyle{definition}
\newtheorem{theorem}{Theorem}
\newtheorem{definition}{Definition}
\newtheorem{lemma}[theorem]{Lemma}

\stackMath
\newcommand\reallywidehat[1]{
\savestack{\tmpbox}{\stretchto{
  \scaleto{
    \scalerel*[\widthof{\ensuremath{#1}}]{\kern-.6pt\bigwedge\kern-.6pt}
    {\rule[-\textheight/2]{1ex}{\textheight}}
  }{\textheight}
}{0.5ex}}
\stackon[1pt]{#1}{\tmpbox}
}
\parskip 1ex

\title{Analytic structure in spaces of Lipschitz functions}
\author{Stephen Deterding\thanks{Department of Mathematics and Physics, Marshall University, Huntington WV 25705, USA}}
\date{}

\begin{document}

\maketitle

\begin{abstract}
    Let $U \subseteq \mathbb C$ be bounded and open. For $0 < \alpha < 1$, $A_\alpha(U)$ is the set of functions in the little Lipschitz class with exponent $\alpha$ that are analytic in a neighborhood of $U$. We consider three conditions, motivated by the properties of bounded point derivations, that show how the functions in $A_\alpha(U)$ can have additional analytic structure than would otherwise be expected. We prove an implication between conditions $(c)$ and $(b)$ and show that there is no implication between conditions $(a)$ and $(c)$.
\end{abstract}

\section{Introduction}

The focus of this paper is on spaces spanned by rational functions and when the functions in those spaces may have additional analytic structure. Let $X$ be a compact subset of the complex plane, let $R_0(X)$ denote the set of rational functions with poles off $X$ and let $R(X)$ denote the closure of $R_0(X)$ in the uniform norm. $R(X)$ is widely studied in connection with the approximation of analytic functions by rational functions; for example, Runge's theorem states that a function that is analytic in a neighborhood of $X$ can be uniformly approximated by rational functions with poles chosen outside $X$.

Some properties of $R(X)$ are well known. A function in $R(X)$ is continuous on $X$ and analytic on the interior of $X$; however, in general it will not be analytic or even differentiable on the boundary. Nevertheless, if the structure of $X$ is nice enough, it is possible for the functions in $R(X)$ to possess additional analytic structure on the boundary. One such example is the existence of a bounded point derivation at a boundary point. 

\begin{definition}
A \textbf{bounded point derivation} $D$ on $R(X)$ at $x$ is a nontrivial bounded linear functional on $R(X)$ such that $D(fg) = D(f) g(x) + D(g) f(x)$.
\end{definition}

\noindent Alternatively, there is a bounded point derivation on $R(X)$ at $x$ if there exists a constant $C >0$ such that

\begin{align*}
    |f'(x)| \leq C ||f||_X
\end{align*}

\noindent for all $f \in R_0(X)$, where $||\cdot||_X$ denotes the uniform norm on $X$. An important theorem of Wang \cite[Corollary 3.5]{Wang1973} shows that if there is a bounded point derivation on $R(X)$ at $x$, then every function in $R(X)$ has an approximate derivative at $X$. Thus, the existence of a bounded point derivation at a boundary point implies that $R(X)$ has a greater semblance of analyticity at the boundary than would otherwise be expected.

 Other ways that functions can be shown to have additional analytic structure include the existence of an analytic disk and the existence of a nontrivial Gleason part \cite{Cole, Ghosh}; however, these are not considered in the present paper. Motivated by the properties of bounded point derivations, Wang \cite{Wang1974} identified additional conditions that provide further ways that functions in $R(X)$ might have additional analytic structure at a boundary point. Before we state these conditions, we review the following definitions.

\begin{definition}
 A function $\phi(r)$ is called \textbf{admissible} if it is positive and non-decreasing on $(0, \infty)$ and if the associated function $\psi(r) = \frac{r}{\phi(r)}$ is also positive and non-decreasing on $(0, \infty)$ with $\psi(0^+) =0$.
 \end{definition}

 \noindent Examples of admissible functions are the power functions $\phi(r) = r^{\alpha}$ where $0<\alpha <1$ and the function $\phi(r) = \frac{r}{\log(1+r)}$.

\begin{definition}
 Suppose $f(z)$ has derivatives of order $t$ at $x$. The \textbf{error at $z$ of the $t$-th degree Taylor polynomial of $f$ about $x$} is denoted by $R_x^tf(z)$ and defined as

\begin{equation*}
    R_x^t f(z) = f(z) - \sum_{j=0}^t \dfrac{f^{(j)}(x)}{j!} (z-x)^j.
\end{equation*}
\end{definition}

\noindent Let $A_n(x)$ denote the annulus $\{z: 2^{-(n+1)} \leq \vert z-x\vert  \leq  2^{-n}\}$ and let $B(x,r)$ denote the disk centered at $x$ with radius $r$. Let $m$ denote $2$-dimensional Lebesgue measure. 

\begin{definition}
A set $E$ has \textbf{full area density at $x$} if $\dfrac{m(B(x,r) \setminus E)}{m(B(x,r))} \to 0$ as $r \to 0$.
\end{definition}

\begin{definition}
    A function $f(z)$ has an \textbf{approximate derivative} at $x$ if there exists a set $E$ with full area density at $x$ such that 

\begin{align*}
    \lim_{z\to x, z\in E} \frac{f(z)-f(x)}{z-x}
\end{align*}

\noindent is finite. The value of the approximate derivative is the value of this limit.
\end{definition}

\begin{definition}
    Let $X$ be a compact subset of $\mathbb C$. The \textbf{analytic capacity of $X$} is denoted by $\gamma(X)$ and defined as

\begin{align*}
    \gamma(X) = \sup |f'(\infty)|
\end{align*}

\noindent where the supremum is taken over all functions $f$ that satisfy the following conditions:

\begin{enumerate}
    \item $f$ is analytic on $\hat{\mathbb C} \setminus X$.
    \item $||f||_{\hat{\mathbb C} \setminus X} \leq 1$
    \item $f(\infty) = 0$.
\end{enumerate}
\end{definition} 

\noindent See \cite[pg. 196]{Gamelin} for additional information on analytic capacity.

Let $X$ be a compact subset of the plane with $x \in X$, let $t$ be a non-negative integer, and let $\phi(r)$ be an admissible function. Wang's three conditions are the following.

\begin{enumerate}[(A)]
    \item For each $\epsilon >0 $ the set 
    
    \begin{align*}
   & \{y\in X: \vert R_x^t f(y) \vert \leq \epsilon \phi(\vert y-x \vert)\vert y-x \vert^t \Vert f \Vert_X \\ &\text{ for all }  f \in R_0(X)\}
    \end{align*}
    
   \noindent has full area density at $x$.
    
    \item There exists a representing measure $\mu$ for $x$ on $R(X)$ such that $\mu(x) = 0$ and 
    
    \begin{equation*} 
    \int \dfrac{d \vert \mu \vert(z)}{\vert z-x \vert^t\phi(\vert z-x \vert)} < \infty.
    \end{equation*}
    
    \item The series 
    
    \begin{equation*}
        \sum_{n=1}^{\infty} \dfrac{2^{n(t+1)}\gamma(A_n(x) \setminus X)}{\phi(2^{-n})} 
    \end{equation*}
    
 \noindent converges.
    
\end{enumerate}

These conditions can be thought of as generalizations of important properties of bounded point derivations on $R(X)$. Condition (A) is a generalization of Wang's theorem \cite[Theorem 3.4]{Wang1973} that the existence of a bounded point derivation implies an approximate Taylor theorem for functions in $R(X)$, condition (B) generalizes the property that the existence of a bounded point derivation on $R(X)$ at $x$ implies the existence of a representing measure for $x$ on $R(X)$ \cite[pg. 351]{Wang1973}, and condition (C) generalizes Hallstrom's condition for the existence of bounded point derivations \cite{Hallstrom1969}. These three conditions are known to be equivalent when $\phi \equiv 1$, and conditions (B) and (C) are equivalent for all admissible $\phi$; however, if $\phi \not \equiv 1$ then condition (A) is not equivalent to the other two conditions. In this case (B) and (C) imply (A) but (A) does not imply (B) or (C) \cite{O'Farrell1978}.

We now consider the analogues of these conditions for related spaces of functions. A previous manuscript \cite{Deterding2024} considered the analogues of these conditions for the space $R^p(X)$, the closure of $R_0(X)$ in the $L^p$ norm, and showed that the corresponding conditions have the same relationships among themselves as their analogues for $R(X)$. In the present manuscript, we will focus on spaces related to Lipschitz approximation. Much study has been done in this area \cite{Lord1994, O'Farrell2016, Sherbert}, but there are still many unanswered questions and open problems. The first goal of this paper is to determine the correct analogues of Wang's conditions for the case of analytic Lipschitz functions, which will be done in the next section. After that we will verify two implications which we are able to show; that $(c)$ implies $(b)$ in Section 3 and that $(a)$ does not imply $(c)$ in Section 4.

\section{Lipschitz functions and the Lipschitz analogues of Wang's Conditions}

To determine the correct analogues of Wang's conditions for Lipschitz functions, we have to consider the functional properties of Lipschitz spaces, which are very different from those of $R(X)$ and $R^p(X)$. We start by defining a Lipschitz condition.

\begin{definition}
Let $0< \alpha < 1$. A function $f$ is said to be a \textbf{Lipschitz function with exponent $\alpha$} if there exists a constant $k>0$ such that for all $z,w \in \mathbb C$

\begin{align}
\label{lipschitz}
    |f(z)-f(w)| \leq k |z-w|^{\alpha}. 
\end{align}
\end{definition}

\noindent For such a function, the Lipschitz seminorm, denoted as $\Vert \cdot\Vert _{\text{Lip}_{\alpha}}'$ is defined to be the smallest value of $k$ that satisfies \eqref{lipschitz}. It is only a seminorm because $k=0$ whenever $f$ is a constant function. The Lipschitz norm with exponent $\alpha$, denoted as $\Vert \cdot\Vert _{\text{Lip}_{\alpha}}$, is defined by

\begin{align*}
    \Vert f\Vert _{\text{Lip}_{\alpha}} = \Vert f\Vert _{\text{Lip}_{\alpha}}' +  ||f(z)||_{\infty}.
\end{align*}

\noindent For an arbitrary set $X$, the Lipschitz seminorm with exponent $\alpha$ on $X$, denoted as $\Vert \cdot\Vert _{\text{Lip}_{\alpha}(X)}'$ is defined to be the smallest value of $k$ that satisfies \eqref{lipschitz} for all $z,w \in X$. The Lipschitz norm with exponent $\alpha$ on $X$, denoted as $\Vert \cdot\Vert _{\text{Lip}_{\alpha}(X)}$, is defined by

\begin{align*}
    \Vert f\Vert _{\text{Lip}_{\alpha}(X)} = \Vert f\Vert _{\text{Lip}_{\alpha}(X)}' +  ||f(z)||_{X}.
\end{align*}

The space of Lipschitz functions on $\mathbb C$ equipped with the Lipschitz norm forms a Banach space, which we denote by Lip$_\alpha(\mathbb C)$. This set contains an important subspace, the little Lipschitz class, which plays an important role in the theory of Lipschitz functions.

\begin{definition}
The \textbf{little Lipschitz class with exponent $\alpha$}, which is denoted by lip$_\alpha(\mathbb C)$, is the set of functions in Lip$_\alpha(\mathbb C)$ that also satisfy the property that 
\begin{equation*}
    \lim_{\delta \to 0^+} \sup_{0<|z-w|<\delta} \dfrac{|f(z)-f(w)|}{|z-w|^{\alpha}} = 0.
\end{equation*}
\end{definition}

\noindent Let $U$ be a bounded open subset of $\mathbb C$. The focus of this paper is on the following space of analytic functions on $U$ that satisfy a global Lipschitz condition. 

\begin{definition}
 $A_{\alpha}(U)$ denotes the set of functions in the little Lipschitz class with exponent $\alpha$ that are also analytic on $U$. 
\end{definition}

A key difference between the Lipschitz spaces and $R(X)$ is in how their linear functionals can be represented by integration. It follows from the work of De Leeuw \cite{deLeeuw1961} that linear functionals acting on $A_{\alpha}(U)$ have the following integral representation. If $L$ is a linear functional acting on $A_{\alpha}(U)$ and $Y$ is the closure of $U$, then there exists a Borel-regular measure $\mu$ on $Y \times Y$ such that

\begin{align*}
    L(f) = \int_{Y \times Y} \frac{f(z)-f(w)}{|z-w|^{\alpha}} d\mu(z,w)
\end{align*}

\noindent whenever $f \in$ $A_{\alpha}(U)$. In general $L$ cannot be represented by direct integration against a measure in contrast to linear functionals on $R(X)$.

Another important difference between $A_{\alpha}(U)$ and $R(X)$ is found in their associated capacities. Every Banach space has an associated capacity, and many properties of that space, such as the existence of bounded point derivations, are characterized using this capacity. For $R(X)$ the associated capacity is analytic capacity, while for $A_{\alpha}(U)$ the associated capacity is lower $(1+\alpha)$-dimensional Hausdorff content. This can be seen in condition (C) and \cite{Hallstrom1969} for $R(X)$ and \cite{Lord1994} for $A_\alpha(U)$. The lower $(1+\alpha)$-dimensional Hausdorff content is defined as follows. Let $E \subseteq \mathbb C$.

\begin{definition}
A \textbf{measure function} is a non-decreasing function $h(t)$, $t\geq 0$, such that $h(t) \to 0$ as $t \to 0$.
\end{definition}

\begin{definition}
    The \textbf{Hausdorff content} associated to a measure function $h$ is denoted by $M^h(E)$ and defined by 

    \begin{equation*}
    M^h(E) = \inf \sum_j h(r_j),
\end{equation*}

\noindent where the infimum is taken over all countable covers of $E$ by squares with sides of length $r_j$.
\end{definition}

\begin{definition}
 The \textbf{lower $(1+\alpha)$-dimensional Hausdorff content of $E$} is denoted by $M_*^{1+\alpha}(E)$ and defined by 

\begin{equation*}
  M_*^{1+\alpha}(E)= \sup M^h(E) 
\end{equation*}

\noindent where the supremum is taken over all measure functions $h$ such that $h(t) \leq t^{1+\alpha}$ and $h(t)t^{-(1+\alpha)} \to 0$ as $t\to 0^+$.
\end{definition}

\noindent Furthermore, up to a constant multiplicative bound, the infimum can be taken over countable coverings of $E$ by dyadic squares \cite[pg. 61, Lemma 1.4]{Garnett}. It follows from the definition that lower $(1+\alpha)$-dimensional Hausdorff content is monotone; that is, $U \subseteq V$ implies $M_*^{1+\alpha}(U) \leq M_*^{1+\alpha}(V)$. Another key property of Hausdorff content is that for a disk $B$ with radius $r$, $M_*^{1+\alpha}(B) = r^{1+\alpha}$.

With these distinctions between $R(X)$ and $A_{\alpha}(U)$ understood, we can now determine the analogue of Wang's conditions for $A_\alpha(U)$. Let $U$ be a bounded open subset of $\mathbb C$ with $x \in U$, let $Y$ denote the closure of $U$, let $t$ be a non-negative integer, and $\phi$ be an admissible function. The analogues of Wang's conditions for $A_\alpha(U)$ are:

\begin{enumerate}[(a)]
    \item For each $\epsilon >0 $ the set 
    
    \begin{align*}
   & \{y\in U: \vert R_x^t f(y) \vert \leq \epsilon \phi(\vert y-x \vert)\vert y-x \vert^t \Vert f \Vert_{\text{Lip}_\alpha(\mathbb C)} \\ &\text{ for all }  f \in A_{\alpha}(U)\}
    \end{align*}
    
 \noindent has full area density at $x$.
    
    \item There exists a Borel regular measure $\mu$ on $Y \times Y$ such that for all functions $f \in A_{\alpha}(U)$

    \begin{align*}
        f(x) = \int \frac{f(z)-f(w)}{|z-w|^{\alpha}} d\mu(z,w) 
    \end{align*}
    
    \noindent and 
    
    \begin{equation*} 
    \int \dfrac{d |\mu|(z,w)}{\vert z-x \vert^t\phi(\vert z-x \vert)} < \infty.
    \end{equation*}
    
    \item The series 
    
    \begin{equation*}
        \sum_{n=1}^{\infty} \dfrac{2^{n(t+1)} M_*^{1+\alpha}(A_n(x) \setminus U)}{\phi(2^{-n})} 
    \end{equation*}
    
 \noindent converges.
    
\end{enumerate}

\noindent Based on the relationships of the corresponding conditions for $R(X)$ and $R^p(X)$ we conjecture that $(b)$ and $(c)$ are equivalent and imply $(a)$ but $(a)$ does not imply $(b)$ or $(c)$.

We close this section with the following Lipschitz analog of Melnikov's theorem \cite[Theorem 4]{Melnikov} for $R(X)$, which will be used several times in the following sections. See \cite[2.6]{Lord1994} for a proof. 

\begin{lemma}
\label{Lord}
     Let $\Gamma$ be a piecewise analytic curve that encloses a region $U$ and is free of outward pointing cusps, and let $0<\alpha<1$. Then there is a constant $C$ which only depends on the curve $\Gamma$ such that

\begin{equation*} 
\left| \int_{\Gamma} f(z) dz\right| \leq C M_*^{1+\alpha}(K) ||f||_{\text{Lip}_\alpha(K)}'.
\end{equation*}

\noindent whenever $f \in $ lip$_\alpha(\mathbb C)$
 is analytic on $U \setminus K$, where $K$ is a compact subset of $U$.

\end{lemma}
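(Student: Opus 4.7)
The plan is to adapt Melnikov's argument for analytic capacity \cite{Melnikov} to the Lipschitz setting, by covering $K$ with dyadic squares, deforming $\Gamma$ onto the boundary of the cover via Cauchy's theorem, and bounding each resulting integral using a Lipschitz oscillation estimate. Since $f\in\text{lip}_\alpha(\mathbb C)$, the modulus
\[
\omega(\delta)=\sup_{0<|z-w|\le\delta}\frac{|f(z)-f(w)|}{|z-w|^\alpha}
\]
is non-decreasing and tends to $0$ as $\delta\to 0^+$. After rescaling by a suitable constant, the function $h(t)=\omega(t)\,t^{1+\alpha}/\|f\|'_{\text{Lip}_\alpha(K)}$ (or a doubling envelope thereof) is a non-decreasing measure function satisfying $h(t)\le t^{1+\alpha}$ and $h(t)/t^{1+\alpha}\to 0$, so it is admissible in the supremum defining $M_*^{1+\alpha}(K)$; hence $M^h(K)\le M_*^{1+\alpha}(K)$. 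By the dyadic comparison recorded after the definition, for any $\varepsilon>0$ I can choose a cover $\{Q_j\}$ of $K$ by dyadic squares with side lengths $r_j$ such that $\sum_j h(r_j)\le M_*^{1+\alpha}(K)+\varepsilon$, and I may assume each $Q_j$ lies in $U$ and meets $K$, since $K$ is a compact subset of $U$.

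Because $f$ is analytic on $U\setminus K\supset U\setminus\bigcup_j Q_j$, Cauchy's theorem applied to the region enclosed between $\Gamma$ and the boundaries of the $Q_j$ yields $\int_\Gamma f\,dz=\sum_j\int_{\partial Q_j}f\,dz$. Picking $w_j\in K\cap Q_j$ and using $\int_{\partial Q_j}dz=0$,
\[
\left|\int_{\partial Q_j}f(z)\,dz\right|=\left|\int_{\partial Q_j}(f(z)-f(w_j))\,dz\right|\le 4r_j\cdot\omega(\sqrt 2 r_j)(\sqrt 2 r_j)^\alpha\le C\,\|f\|'_{\text{Lip}_\alpha(K)}\,h(r_j).
\]
Summing over $j$ and letting $\varepsilon\to 0$ then gives the claimed bound, with a constant absorbing the geometric factors contributed by $\Gamma$.

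The main obstacle is that $\omega$ measures oscillation over all of $\mathbb C$, so the estimate $|f(z)-f(w_j)|\le\omega(|z-w_j|)|z-w_j|^\alpha$ a priori produces the bound in terms of $\|f\|'_{\text{Lip}_\alpha(\mathbb C)}$ rather than $\|f\|'_{\text{Lip}_\alpha(K)}$. To pass from the former to the latter one must exploit the analyticity of $f$ on $U\setminus K$: representing $f$ on $U\setminus K$ by a Cauchy integral whose contour can be squeezed onto $K$ together with $\Gamma$, the oscillation of $f$ between a point of $\partial Q_j$ and a nearby point of $K$ can be controlled by $\|f\|'_{\text{Lip}_\alpha(K)}$ at the cost of a multiplicative constant depending only on $\Gamma$. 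This reduction, which crucially uses that $\Gamma$ is piecewise analytic and free of outward pointing cusps (so that these Cauchy contours behave well near $\partial U$), is the technical heart of the argument.
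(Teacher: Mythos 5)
Your covering scheme is the right skeleton for a Melnikov-type estimate, but the argument as written does not prove the lemma: the step you yourself label ``the technical heart'' --- passing from the global seminorm $\|f\|'_{\text{Lip}_\alpha(\mathbb C)}$ to the restricted seminorm $\|f\|'_{\text{Lip}_\alpha(K)}$ --- is asserted, not proved, and it is not a routine fill-in. With $w_j\in K\cap Q_j$ and $z\in\partial Q_j\setminus K$, the bound $|f(z)-f(w_j)|\le\omega(|z-w_j|)|z-w_j|^\alpha$ genuinely involves the oscillation of $f$ off $K$; a maximum-principle argument on $U\setminus K$ does not localize this, because the boundary of that region contains $\partial U$, which is far from $w_j$. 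The standard way to close this gap is a Dolzhenko/Vitushkin-type localization: one cuts $f$ with a partition of unity via the operator $T_\varphi f=\varphi f-\tfrac1\pi\,\mathcal C(f\,\bar\partial\varphi)$, uses the (nontrivial) fact that $T_\varphi$ is bounded on $\mathrm{lip}_\alpha$ for $0<\alpha<1$, and then invokes the theorem that a continuous function analytic off a compact set $E$ has global $\mathrm{Lip}_\alpha$ seminorm comparable to its seminorm on $E$. None of this is present in your sketch, and ``squeezing a Cauchy contour onto $K$ together with $\Gamma$'' is not a substitute for it; it is also where the hypotheses on $\Gamma$ (piecewise analytic, no outward cusps) and the restriction $0<\alpha<1$ actually enter. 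Note that the paper itself does not prove this lemma either --- it cites Lord--O'Farrell [Lord1994, 2.6], whose proof proceeds exactly through this localization machinery.

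A second, related defect: your measure function $h(t)=\omega(t)\,t^{1+\alpha}/\|f\|'_{\text{Lip}_\alpha(K)}$ is not admissible for $M_*^{1+\alpha}$, because $h(t)\le t^{1+\alpha}$ requires $\omega(t)\le\|f\|'_{\text{Lip}_\alpha(K)}$, and $\omega$ is the \emph{global} modulus of continuity, which can exceed the seminorm on $K$. Normalizing instead by $\|f\|'_{\text{Lip}_\alpha(\mathbb C)}$ makes $h$ admissible but then delivers the conclusion only with the global seminorm on the right-hand side --- a strictly weaker statement that would not support the paper's applications (both Theorem \ref{c then b} and Lemma \ref{a is true} rely essentially on the seminorm being taken over a small set such as $B(a_n,r_n)$). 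The remaining technicalities you elide (replacing $\omega(\sqrt2\,r_j)$ by a doubling envelope, arranging the dyadic squares to lie in $U$ and be pairwise non-nested so that $\int_\Gamma f=\sum_j\int_{\partial Q_j}f$) are fixable, but the localization of the seminorm to $K$ is a genuine missing ingredient.
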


\section{(c) implies (b)}

We first prove that condition $(c)$ implies condition $(b)$. By letting $\phi(r)$ be an arbitrary positive non-decreasing function we may assume that $t=0$ without loss of generality.

\begin{theorem}
\label{c then b}
 Let $\phi(r)$ be a positive non-decreasing function, let $U$ be an open subset of the complex plane with $x \in U$, and let $Y$ denote the closure of $U$. Suppose

\begin{align*}
    \sum_{n=1}^{\infty} 2^n \phi(2^{-n}) M^{1+\alpha}_*(A_n(x) \setminus U) < \infty.
\end{align*}

\noindent Then there exists a Borel regular measure $\mu$ on $Y \times Y$ such that for all functions $f \in A_\alpha(U)$,

\begin{align*}
    f(x) = \int \frac{f(z)-f(w)}{|z-w|^{\alpha}} d\mu(z,w)
\end{align*}

\noindent and 

\begin{align*}
    \int \frac{d\mu(z,w)}{\phi(|z-x|)} < \infty.
\end{align*}
    
\end{theorem}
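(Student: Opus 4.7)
My strategy is to construct $\mu$ as an absolutely convergent sum $\mu = \sum_{n \geq 1} \mu_n$ of local Borel regular measures on $Y \times Y$, one for each dyadic annulus $A_n(x)$, by combining a Cauchy integral decomposition of $f(x)$ with Lemma~\ref{Lord} and De Leeuw's representation theorem. Throughout, translate so that $x = 0$.

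First, for each $n \geq 1$, I would use the dyadic-square characterization of lower Hausdorff content cited from \cite[Lemma 1.4]{Garnett} to choose a compact set $K_n \supseteq A_n(0) \setminus U$ built from finitely many closed dyadic squares of sidelength at most $c \cdot 2^{-n}$, with piecewise analytic, cusp-free boundary $\partial K_n$, and $M_*^{1+\alpha}(K_n) \lesssim M_*^{1+\alpha}(A_n(0) \setminus U)$. By taking the squares at sufficiently fine scales, I may assume the $K_n$ are pairwise disjoint and $K_n \subset A_n(0)$. This ensures Lemma~\ref{Lord} is available along $\partial K_n$ with $K = K_n$.

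Second, I would decompose $f(0)$ via Cauchy's theorem, applied iteratively in the region $B(0,1) \setminus \bigcup_n K_n \subset U$: starting from the unit circle and deforming past each $K_n$ as one lets the inner radius $r \to 0$. Using that $\frac{1}{2\pi i} \int_{|z|=r} f(z)/z\, dz \to f(0)$ by continuity of $f$, telescoping yields the identity
\begin{equation*}
2\pi i\, f(0) \;=\; \int_{|z|=1} \frac{f(z)}{z}\, dz \;-\; \sum_{n \geq 1} \int_{\partial K_n} \frac{f(z)}{z}\, dz,
\end{equation*}
an equality of bounded linear functionals on $A_\alpha(U)$ (whose right-hand-side convergence is part of what must be proved).

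Third, each local functional $L_n(f) := \frac{1}{2\pi i}\int_{\partial K_n} f(z)/z\, dz$ would be estimated by rewriting $f(z)/z = (f(z) - f(w_n))/z + f(w_n)/z$ for a chosen point $w_n \in K_n$ and applying Lemma~\ref{Lord} to $h_n(z) := (f(z) - f(w_n))/z$. A careful computation of the Lipschitz seminorm on $K_n \subset A_n(0)$, using $|z|, |w_n| \asymp 2^{-n}$ together with the non-decreasing behavior of $\phi$ and the defining property of $\text{lip}_\alpha$, yields $\|h_n\|_{\text{Lip}_\alpha(K_n)}' \lesssim 2^n \phi(2^{-n})\, \|f\|_{\text{Lip}_\alpha}'$, giving by Lemma~\ref{Lord}
\begin{equation*}
\left| \int_{\partial K_n} h_n(z)\, dz \right| \;\lesssim\; 2^n\, \phi(2^{-n})\, M_*^{1+\alpha}(A_n(0)\setminus U)\, \|f\|_{\text{Lip}_\alpha}';
\end{equation*}
the constant term $f(w_n)\oint_{\partial K_n} dz/z$ is handled by an auxiliary piece representing point evaluation at $w_n$. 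De Leeuw's theorem then produces $\mu_n$ on $Y \times Y$ representing $L_n$, with support essentially in $K_n$ (so $|z| \asymp 2^{-n}$ there) and total variation bounded by the above.

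Finally, setting $\mu := \sum_n \mu_n$ together with the measure representing the unit-circle integral, the bound
\begin{equation*}
\int \frac{d|\mu|(z,w)}{\phi(|z|)} \;\lesssim\; \sum_n \frac{\|\mu_n\|}{\phi(2^{-n})} \;\lesssim\; \sum_n 2^n\, \phi(2^{-n})\, M_*^{1+\alpha}(A_n(0)\setminus U) \;<\; \infty
\end{equation*}
follows from the hypothesis, and the telescoping Cauchy identity, recast in De Leeuw form via the auxiliary points $w_n$, gives $f(0) = \int \frac{f(z)-f(w)}{|z-w|^\alpha}\, d\mu(z,w)$.

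The main obstacle I anticipate is securing the refined Lipschitz seminorm estimate $\|h_n\|_{\text{Lip}_\alpha(K_n)}' \lesssim 2^n \phi(2^{-n}) \|f\|_{\text{Lip}_\alpha}'$ with the correct $\phi(2^{-n})$ factor. The naive bound on the Lipschitz seminorm of $(f(z)-f(w_n))/z$ on $K_n$ produces only a factor of $2^n$ from the Lipschitz-$\alpha$ property alone, and gaining the additional $\phi(2^{-n})$ requires delicate use of the defining limit condition of $\text{lip}_\alpha$ combined with a carefully chosen location of $w_n$. A secondary technical point is verifying that De Leeuw's construction can be arranged to produce $\mu_n$ with support essentially inside $K_n$, which should follow from inspection of the proof in \cite{deLeeuw1961} but needs checking.
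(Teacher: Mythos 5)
Your overall architecture --- a Cauchy decomposition of $f(0)$ over the dyadic annuli, Lemma~\ref{Lord} applied annulus by annulus, De Leeuw representation of each local functional, and summation of the resulting measures --- is exactly the paper's. The genuine gap is at the step you yourself flag as the main obstacle: the ``refined'' estimate $\Vert (f(z)-f(w_n))/z\Vert'_{\text{Lip}_\alpha(K_n)} \lesssim 2^n\,\phi(2^{-n})\,\Vert f\Vert'_{\text{Lip}_\alpha}$ is not provable. The defining condition of $\mathrm{lip}_\alpha$ is a qualitative little-$o$ with no uniform rate over the unit ball of $A_\alpha(U)$, and $\phi$ is an arbitrary prescribed positive non-decreasing function, so no choice of $w_n$ can manufacture a factor $\phi(2^{-n})\to 0$; the subtraction of $f(w_n)$ buys you only the removal of the spurious $2^{n\alpha}$ coming from the seminorm of $1/z$, leaving the plain bound $\lesssim 2^n\Vert f\Vert'$. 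Moreover, even if the refined estimate held, your closing chain does not: from $\Vert\mu_n\Vert\lesssim 2^n\phi(2^{-n})M^{1+\alpha}_*(A_n\setminus U)$ you get $\sum_n\Vert\mu_n\Vert/\phi(2^{-n})\lesssim\sum_n 2^n M^{1+\alpha}_*(A_n\setminus U)$, which is \emph{not} controlled by $\sum_n 2^n\phi(2^{-n})M^{1+\alpha}_*(A_n\setminus U)<\infty$ when $\phi(0^+)=0$. So the argument as written does not close.

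The factor of $\phi$ has to enter elsewhere, and this is what the paper does: it keeps only the unrefined bound $\Vert\nu_n\Vert\lesssim 2^n M^{1+\alpha}_*(A_n\setminus U)\Vert f\Vert'$ and extracts the weight from the \emph{support} of $\nu_n$. Since $\nu_n$ lives on $\overline{U\cap A_n}\times\overline{U\cap A_n}$, one has $|z|\geq 2^{-(n+1)}$ there, hence $\phi(|z|)^{-1}\leq\phi(2^{-(n+1)})^{-1}$ and $\int\phi(|z|)^{-1}\,d|\nu_n|\lesssim\phi(2^{-(n+1)})^{-1}2^n M^{1+\alpha}_*(A_n\setminus U)$, which is summable precisely under condition $(c)$ with $t=0$, i.e.\ with $\phi(2^{-n})$ in the \emph{denominator}. (The hypothesis as printed in the theorem, with $\phi(2^{-n})$ in the numerator, is inconsistent with the conclusion and with the paper's own final display and should be read as $\sum_n 2^n\phi(2^{-n})^{-1}M^{1+\alpha}_*(A_n(x)\setminus U)<\infty$; reading it literally is what pushed you toward the impossible seminorm gain.) Two secondary points: the constant in Lemma~\ref{Lord} depends on the curve, so integrating over $\partial K_n$ for $n$-dependent unions of dyadic squares gives no uniform constant, whereas the $\partial A_n$ used in the paper are rescalings of a single fixed curve; and your auxiliary ``point evaluation at $w_n$'' piece is unnecessary, since $\oint dz/z=0$ over a contour not winding around the origin.
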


\begin{proof}
    Without loss of generality, we may assume that $x=0$. Let $A_n = A_n(0)$ and let $T_n$ be a linear functional on $A_\alpha(U \cap A_n)$ defined by

\begin{align*}
    T_n(f) = -\int_{A_n} \frac{f(z)}{z} dz
\end{align*}

\noindent and let $f \in A_\alpha(U)$. It follows from Lemma \ref{Lord} that 

\begin{align*}
    \left| \int_{A_n} \frac{f(z)}{z} dz \right| \leq C 2^n M_*^{1+\alpha}(A_n \setminus U) ||f||_{\text{Lip}_{\alpha}(A_n \setminus U)}'
\end{align*}

\noindent and thus $T_n$ is a bounded linear functional on $A_{\alpha}(U \cap A_n)$. Let $Y_n$ denote the closure of $U \cap A_n$. It follows from De Leeuw's representation of lip$\alpha^*$ that there is a Borel-regular measure $\nu_n$ on $Y_n \times Y_n$ such that

\begin{align*}
    T_n(f) = \int \frac{f(z)-f(w)}{|z-w|^{\alpha}} d\nu_n(z,w)
\end{align*}

\noindent and 

\begin{align*}
    \int |\nu_n(z,w)| \leq C 2^n M_*^{1+\alpha}(A_n \setminus U).
\end{align*}

\noindent Let $\displaystyle \nu = \frac{1}{2\pi i} \sum_{n=1}^{\infty} \nu_n$ and let $T$ be the linear functional on $A_\alpha(U \cap A_0)$ defined by 

\begin{align*}
    T(f) = \frac{1}{2\pi i}\int_{|z|=\frac{1}{2}} \frac{f(z)}{z} dz.
\end{align*}

\noindent Then $|T(f)| \leq 2||f||_{\text{Lip}_{\alpha}}(U \cap A_0)$ and thus $T$ is bounded. Hence, there is a Borel regular measure $\lambda$ on $Y_0 \times Y_0$ such that 

\begin{align*}
    T(f) = \int \frac{f(z)-f(w)}{|z-w|^{\alpha}} d\lambda(z,w)
\end{align*}

\noindent and 

\begin{align*}
    \int |\lambda(z,w)| \leq C.
\end{align*}

 Since $f$ is analytic in a neighborhood of $0$, there exists a positive integer $N$ such that $f$ is analytic on the disk $\{z:|z| \leq 2^{-N}\}$. Hence, by the Cauchy integral formula, there exists a constant $N>0$ such that $\displaystyle f(0) = \frac{1}{2\pi i}\int_{|z|=2^{-N}} \frac{f(z)}{z}dz$. Thus,

\begin{align*}
f(0) &= \frac{1}{2\pi i}\int_{|z|=2^{-N}} \frac{f(z)}{z}dz\\
 &= \frac{1}{2\pi i}  \int_{|z|=\frac{1}{2}} \frac{f(z)}{z} dz - \frac{1}{2\pi i} \sum_{n=1}^{\infty} \int_{\partial A_n} \frac{f(z)}{z} dz\\
    &= \int \frac{f(z)-f(w)}{|z-w|^{\alpha}} d\lambda(z,w) + \int \frac{f(z)-f(w)}{|z-w|^{\alpha}} \frac{1}{2\pi i} \sum_{n=1}^{\infty} d\nu_n(z,w)\\
     &= \int \frac{f(z)-f(w)}{|z-w|^{\alpha}} d\lambda(z,w) + \int \frac{f(z)-f(w)}{|z-w|^{\alpha}} d\nu.
\end{align*}

\noindent Let $\mu = \lambda + \nu$. Then 

\begin{align*}
    f(0) = \int \frac{f(z) - f(w)}{|z-w|^{\alpha}} d \mu
\end{align*}

\noindent as desired. Lastly, since $\nu_n$ has support on $Y_n \times Y_n$ it follows that

\begin{align*}
    \int \frac{|d\mu(z,w)|}{\phi(|z|)} &\leq \frac{1}{2\pi} \int \phi(|z|)^{-1} \left| \sum_{n=1}^{\infty} \nu_n(z,w) \right| + \int \phi(|z|)^{-1} |\lambda|\\
    &\leq\frac{1}{2\pi} \int \sum_{n=1}^{\infty} \phi(|2^{-n}|)^{-1} |\nu_n(z,w)| + C\\
    &\leq C \sum_{n=1}^{\infty}  \phi(|2^{-n}|)^{-1} 2^n M_*^{1+\alpha}(A_n \setminus U) < \infty,
\end{align*}

\noindent thus completing the proof.
    
\end{proof}

\section{(a) does not imply (c)}

We now present an example showing that, in general, condition $(a)$ does not imply condition $(c)$. For this example, we consider the case $t=1$ because this construction cannot be used to give a counterexample when $t=0$ as demonstrated in the appendix. We do not know if $(a)$ implies $(c)$ when $t=0$.

\begin{theorem}
\label{main}
Let $\phi(r)$ be an admissible function such that $\phi(0^{+})=0$, and let $0<\alpha<1$. Then there is an open set $U$ containing $0$ such that, for each $\epsilon>0$ the set $\{y\in U: \vert f(y)-f(0)-f'(0)y\vert \leq \epsilon \phi(\vert y\vert ) \vert y \vert \Vert f\Vert _{\text{Lip}_\alpha} \text{ for all } f \in A_{\alpha}(U)\}$ has full area density at $0$, but

\begin{equation*}
    \sum_{n=1}^{\infty} \phi(2^{-n})^{-1} 4^{n} M_*^{1+\alpha}(A_n(0) \setminus U) = \infty.
\end{equation*}

\end{theorem}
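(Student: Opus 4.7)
The plan is to construct $U = B(0,1) \setminus \bigcup_k \overline{D_k}$, where $\overline{D_k} = \overline{B(c_k, r_k)}$ is a small closed disk placed in $A_{n_k}(0)$ along the positive real axis for a rapidly increasing sequence $\{n_k\}$, with radii calibrated so that $r_k^{1+\alpha} = C\phi(2^{-n_k})\cdot 4^{-n_k}$ for a fixed constant $C$. Monotonicity of $M_*^{1+\alpha}$ together with the identity $M_*^{1+\alpha}(\overline{D_k}) = r_k^{1+\alpha}$ immediately makes each term of the series in $(c)$ equal to a positive constant, forcing the series to diverge. The shrinking factor $a_k := r_k\cdot 2^{n_k} = (C\phi(2^{-n_k}))^{1/(1+\alpha)}\cdot 2^{-n_k(1-\alpha)/(1+\alpha)}$ then tends to $0$, and $\sum_k a_k^2$ converges provided $\{n_k\}$ is chosen sparse enough, which will be used to preserve full area density in the $(a)$-argument below.

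To verify $(a)$, I would apply Cauchy's theorem on the region $B(0,R)\cap U$ for a fixed $R \in (0,1/2)$, yielding the representation
\begin{equation*}
R_0^1 f(y) = \frac{y^2}{2\pi i}\left(\int_{|z|=R}\frac{f(z)}{z^2(z-y)}\,dz - \sum_{k\,:\,\overline{D_k}\subset B(0,R)}\int_{\partial\overline{D_k}}\frac{f(z)}{z^2(z-y)}\,dz\right)
\end{equation*}
for $f \in A_\alpha(U)$ and $y$ in $U\cap B(0,R/2)$. The outer integral contributes $O(|y|^2\|f\|_{\text{Lip}_\alpha}/R^2)$, which is at most $\epsilon\phi(|y|)|y|\|f\|_{\text{Lip}_\alpha}$ for $|y|$ small since $|y|/\phi(|y|) = \psi(|y|) \to 0$ by admissibility. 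Each obstacle integral is estimated by applying Lemma \ref{Lord} to $g(z) = f(z)/(z^2(z-y))$ with $K = \overline{D_k}$, giving $|\int_{\partial\overline{D_k}} g\,dz| \leq Cr_k^{1+\alpha}\|g\|_{\text{Lip}_\alpha(\overline{D_k})}'$. The product rule together with smoothness estimates for $1/(z^2(z-y))$ on $\overline{D_k}$ (in terms of $|c_k|$, $r_k$, and $\mathrm{dist}(y,\overline{D_k})$) converts this into an obstacle contribution to $R_0^1 f(y)$ of order $\phi(2^{-n_k})|y|\|f\|_{\text{Lip}_\alpha}$ whenever $y$ is separated from $\overline{D_k}$ by at least a multiple of $r_k$.

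The remaining step is to exhibit, for each $\epsilon > 0$, a set $E_\epsilon \subset U$ of full area density at $0$ on which the obstacle-contribution sum is bounded by $\epsilon\phi(|y|)|y|\|f\|_{\text{Lip}_\alpha}$ uniformly in $f$. I would take $E_\epsilon = \{y \in U : \mathrm{dist}(y,\overline{D_k}) > \delta_k \text{ for all }k\}$ with $\delta_k$ a suitable constant multiple of $r_k$; the bad area inside $B(0,2^{-n})$ is then at most $\sum_{n_k > n} C\delta_k^2 \leq C'\sum_{n_k > n}a_k^2\,4^{-n_k}$, and its ratio to $4^{-n}$ decays as $n\to\infty$ by the summability $\sum a_k^2 < \infty$ combined with the sparsity of $\{n_k\}$. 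On $E_\epsilon$, the obstacle bound $C\phi(2^{-n_k})|y|\|f\|_{\text{Lip}_\alpha}$ summed over $n_k > n$ yields $C|y|\|f\|_{\text{Lip}_\alpha}\sum_{n_k > n}\phi(2^{-n_k})$; using the admissibility bound $\phi(2^{-n_k})/\phi(2^{-n}) \leq 2^{n-n_k}$ (a consequence of $\psi$ being nondecreasing), this tail is dominated geometrically by its leading term and is therefore $\leq \epsilon\phi(|y|)|y|\|f\|_{\text{Lip}_\alpha}$ as soon as the smallest $n_{K+1} > n$ exceeds $n$ by enough, which is arranged by the sparsity of $\{n_k\}$.

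The principal obstacle is calibrating sparsity so that divergence of the $(c)$-series coexists with the geometric decay of the obstacle tail required for $(a)$; these pull $a_k$ in opposite directions but are reconciled because admissibility of $\phi$ (whose $r/\phi(r)$ is nondecreasing) forces $\phi$ to grow at most linearly, so amplifying the gaps $n_{k+1} - n_k$ amplifies the $2^{n_k(1-\alpha)}$ factor in the divergent series far more than it can aggravate the obstacle tail. This is the Lipschitz analog of O'Farrell's \cite{O'Farrell1978} counterexample for $R(X)$, with lower $(1+\alpha)$-Hausdorff content in place of analytic capacity and De Leeuw's pair-integral representation underlying the Lipschitz framework. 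The exponent $t=1$ is essential because it supplies the $y^2$ factor in the Cauchy-type representation, giving the additional $|y|$ that absorbs the singular behavior of the kernel $1/(z^2(z-y))$ on each obstacle---slack that vanishes at $t=0$, as the appendix explains.
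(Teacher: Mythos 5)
Your overall architecture---one removed disk per selected annulus, the Cauchy representation of $R_0^1 f(y)$ as a sum of boundary integrals $\int_{\partial \overline{D_k}} f(z)/(z^2(z-y))\,dz$, Lemma \ref{Lord} combined with a Leibniz-type estimate for $\Vert f(z)/(z^2(z-y))\Vert'_{\text{Lip}_\alpha(\overline{D_k})}$, and an exceptional set obtained by deleting neighborhoods of the obstacles---is exactly the paper's. But your calibration $r_k^{1+\alpha}=C\phi(2^{-n_k})4^{-n_k}$, which makes every term of the $(c)$-series equal to the \emph{constant} $C$, breaks the $(a)$-argument at the obstacle lying in $y$'s own annulus, and no sparsity of $\{n_k\}$ can repair this because the difficulty is local. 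Concretely, for $y\in A_{n_k}(0)$ at distance $d$ from $\overline{D_k}$, the seminorm $\Vert f(z)/(z^2(z-y))\Vert'_{\text{Lip}_\alpha(\overline{D_k})}$ contains the term $\sup\,|f(z)-f(w)|\,|z-w|^{-\alpha}|z|^{-2}|z-y|^{-1}$, which is of size $2^{2n_k}d^{-1}\Vert f\Vert'_{\text{Lip}_\alpha}$ and is sharp (it is the term $(I)$ in the paper's estimate). The method therefore yields, for that single obstacle, a contribution no smaller than $|y|^2\,r_k^{1+\alpha}\,2^{2n_k}d^{-1}\Vert f\Vert' \approx C\phi(2^{-n_k})|y|^2 d^{-1}\Vert f\Vert'$; forcing this below $\epsilon\,\phi(|y|)\,|y|\,\Vert f\Vert$ with $|y|\asymp 2^{-n_k}$ requires $d\gtrsim \epsilon^{-1}2^{-n_k}$, i.e.\ you must delete a ball whose radius is comparable to (indeed larger than) the whole annulus, after which $m(B(0,2^{-n_k})\setminus E_\epsilon)/m(B(0,2^{-n_k}))$ stays bounded below and full area density fails. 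Your claim that separation ``by a multiple of $r_k$'' suffices is valid only for the obstacles with $n_k>N$; it fails precisely where it matters. The paper's resolution is to make the $(c)$-series terms tend to zero while remaining non-summable: it takes $r_n^{1+\alpha}=a_n^2 n^{-1}\phi(a_n)$ (so the $n$-th term is $\asymp n^{-1}$) and deletes balls of radius $s_n\asymp n^{-1/3}a_n$, so that the own-annulus contributions are $\asymp n^{-2/3}$ and $\asymp n^{-1/3}$ while the deleted area fraction is $\asymp n^{-2/3}$, all tending to $0$. This trade-off is the heart of the construction and is absent from your proposal.

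Two smaller points. The inequality you invoke, $\phi(2^{-n_k})/\phi(2^{-n})\le 2^{n-n_k}$ for $n_k>n$, is backwards: $\psi(r)=r/\phi(r)$ nondecreasing gives $\phi(2^{-n_k})\ge 2^{n-n_k}\phi(2^{-n})$, a lower bound. The needed tail estimate $\sum_{n_k>N}\phi(2^{-n_k})\le\epsilon\,\phi(2^{-N})$ must instead be arranged by choosing the subsequence so that $\phi(2^{-n_{k+1}})<\tfrac12\phi(2^{-n_k})$, which is where the hypothesis $\phi(0^+)=0$ enters. You also never treat the obstacles with $n_k<N$ (farther from the origin than $y$), whose total contribution is $\asymp |y|^2\sum_{n_k<N}\psi(2^{-n_k})^{-1}$ and requires the additional lacunarity condition $\sum_{n_k<N}\psi(2^{-n_k})^{-1}=o\bigl(\psi(2^{-N})^{-1}\bigr)$ that the paper imposes when selecting its subsequence.
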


We first describe the construction of the set $U$. The main idea is to start with the unit disk and remove a single disk from each annulus $A_n(0) = \{2^{-(n+1)} \leq |z| \leq 2^{-n}\}$. (See Figure \ref{roadrunner}.)  Let $a_n = \frac{3}{4}\cdot 2^{-n}$ be a sequence of points on the positive real axis. These points are chosen so that for each $n$ the point $a_n$ lies in the annulus $A_n(0) = \{2^{-(n+1)} \leq |z| \leq 2^{-n}\}$. We can find a subsequence which we will still denote as $a_n$ such that $\phi(a_n) < \frac{1}{2} \phi(a_{n-1})$ and 

\begin{align*}
    \sum_{n=1}^{j-1} \psi(a_n)^{-1} < \psi(a_j)^{-1},
\end{align*}

\noindent where $\psi(r) = r\phi(r)^{-1}$ is the associated function to $\phi$. Let $r_n$ be a sequence of radii such that $r_n^{1+\alpha} = a_n^2 n^{-1} \phi(a_n)$. Let $B(a,r)$ denote the open disk centered at $a$ with radius $r$, let $\Delta$ denote the open unit disk in $\mathbb{C}$ and let $\displaystyle U = \Delta \setminus \bigcup_{n=1}^{\infty} \overline{B(a_n,r_n)}$. We will prove Theorem \ref{main} by verifying two lemmas. First, we show that condition $(c)$ does not hold for this set.

\begin{figure}
    \centering
    \includegraphics[width=0.3\linewidth]{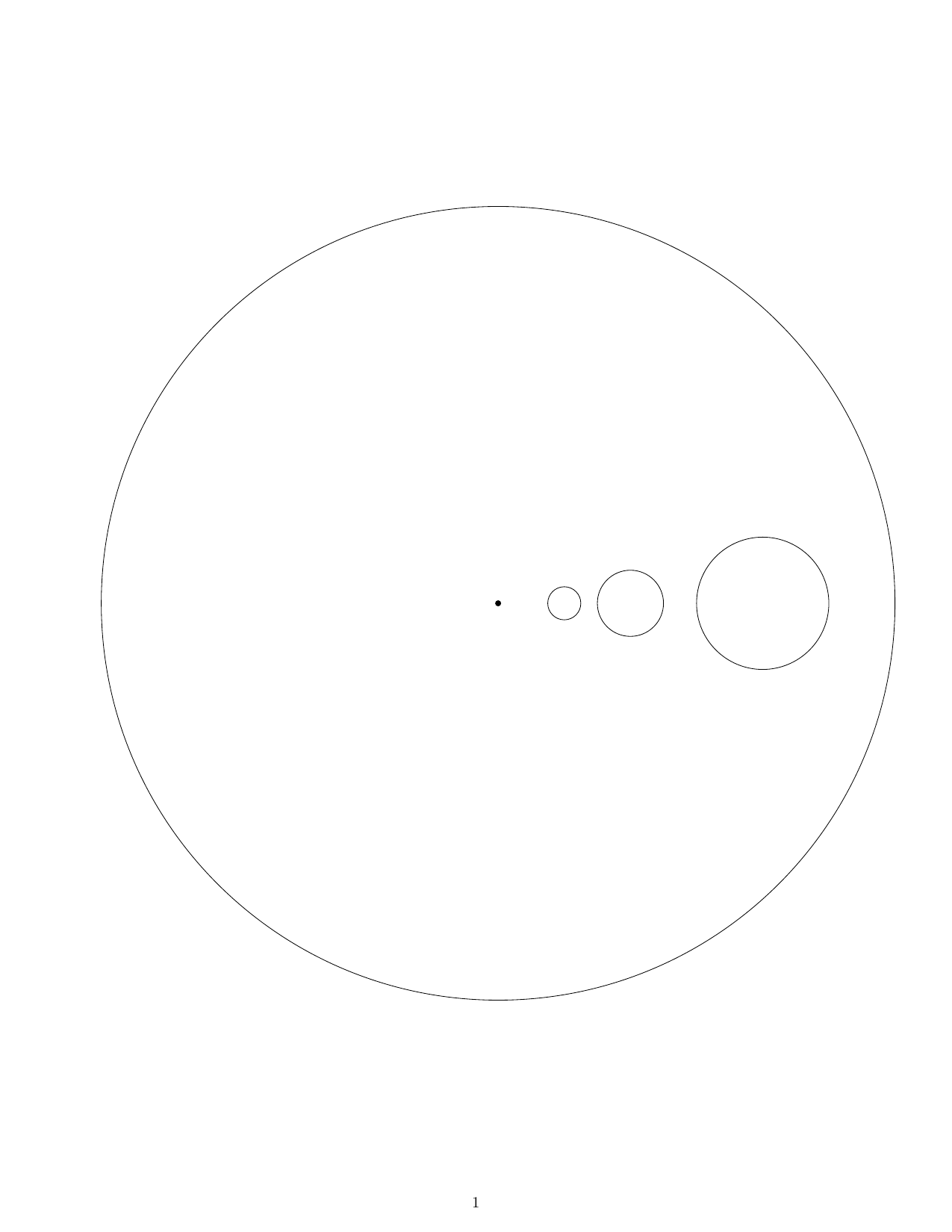}
    \caption{The set $U$ is constructed by removing a disk from each annulus $A_n(0) = \{2^{-(n+1)} \leq |z| \leq 2^{-n}\}$ in the unit disk.}
    \label{roadrunner}
\end{figure}

\begin{lemma}
\label{c is false}
    The set $U$ described above has the property that 

\begin{equation*}
    \sum_{n=1}^{\infty} \phi(2^{-n})^{-1} 4^{n} M_*^{1+\alpha}(A_n(0) \setminus U) = \infty.
\end{equation*}
    
\end{lemma}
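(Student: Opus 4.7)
The strategy is to bound the series below by the contribution of the removed disks and reduce to the divergence of the harmonic series. After the re-indexing of the subsequence in the construction, each $a_n$ has the form $\tfrac{3}{4}\cdot 2^{-m_n}$ for some increasing sequence of positive integers $m_n$, and $a_n$ lies in the annulus $A_{m_n}(0)$ at distance exactly $\tfrac{1}{4}\cdot 2^{-m_n}$ from both of its boundary circles. Since the definition $r_n^{1+\alpha}=a_n^2 n^{-1}\phi(a_n)$ forces $r_n$ to tend to zero much faster than $2^{-m_n}$, we have $\overline{B(a_n,r_n)}\subseteq A_{m_n}(0)\setminus U$ for all sufficiently large $n$, and discarding finitely many initial terms does not affect whether the series diverges.

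Using monotonicity of $M_*^{1+\alpha}$ together with the evaluation $M_*^{1+\alpha}(B)=r^{1+\alpha}$ for a disk $B$ of radius $r$ (both recorded in Section 2), I would lower bound the series by keeping only the indices $k=m_n$:
\begin{align*}
\sum_{k=1}^{\infty} \frac{4^k\, M_*^{1+\alpha}(A_k(0)\setminus U)}{\phi(2^{-k})}
\;\geq\; \sum_{n} \frac{4^{m_n}\, r_n^{1+\alpha}}{\phi(2^{-m_n})}.
\end{align*}
Substituting $r_n^{1+\alpha}=a_n^2 n^{-1}\phi(a_n)$ and $a_n^2=\tfrac{9}{16}\cdot 4^{-m_n}$ simplifies each term on the right to $\tfrac{9}{16n}\cdot \phi(a_n)/\phi(2^{-m_n})$.

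The remaining step is to bound the ratio $\phi(a_n)/\phi(2^{-m_n})$ below by a positive constant. This is where the admissibility of $\phi$ enters: the associated function $\psi(r)=r/\phi(r)$ is non-decreasing, so from $a_n \le 2^{-m_n}$ we get $a_n/\phi(a_n)\le 2^{-m_n}/\phi(2^{-m_n})$, which rearranges to $\phi(a_n)/\phi(2^{-m_n})\ge a_n/2^{-m_n}=\tfrac{3}{4}$. Substituting gives each surviving term at least $\tfrac{27}{64n}$, and since the harmonic series diverges, the lemma follows.

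I do not anticipate a serious obstacle. The argument uses only the geometric data of the construction (the locations and radii of the removed disks) together with the monotonicity of $\psi$; it does not need the more delicate growth conditions $\phi(a_n)<\tfrac{1}{2}\phi(a_{n-1})$ and $\sum_{n<j}\psi(a_n)^{-1}<\psi(a_j)^{-1}$, which presumably will be needed only in the companion lemma establishing condition $(a)$.
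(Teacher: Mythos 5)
Your proposal is correct and follows essentially the same route as the paper: substitute $r_n^{1+\alpha}=a_n^2n^{-1}\phi(a_n)$, use the monotonicity of $\psi$ to bound $\phi(a_n)/\phi(2^{-n})$ below by $3/4$, and reduce to the divergence of $\sum n^{-1}$. The only difference is that you track the re-indexing of the subsequence ($a_n=\tfrac34\cdot 2^{-m_n}$) explicitly, which the paper's proof glosses over; this is a harmless refinement rather than a different argument.
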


\begin{proof}
  Let $A_n = A_n(0)$. Since $A_n \setminus U = B(a_n, r_n)$ and for a disk $B$ with radius $r$, $M_*^{1+\alpha}(B) = r^{1+\alpha}$, it follows that

\begin{align*}
   \sum_{n=1}^{\infty} \phi(2^{-n})^{-1} 4^{n} M_*^{1+\alpha}(A_n \setminus U) &= \sum_{n=1}^{\infty} \phi(2^{-n})^{-1} 4^{n} a_n^2 n^{-1} \phi(a_n)\\
   &= \frac{9}{16}\sum_{n=1}^{\infty} \phi(2^{-n})^{-1} n^{-1} \phi\left(\frac{3}{4}2^{-n}\right).
\end{align*}

\noindent Since $\phi(r) = \frac{r}{\psi(r)}$ and $\psi(r)$ is non-decreasing,

\begin{align*}
    \phi(2^{-n})^{-1} \phi\left(\frac{3}{4}2^{-n}\right) = \frac{3}{4} \psi \left(\frac{3}{4} 2^{-n}\right)^{-1} \psi(2^{-n}) \geq \frac{3}{4}.
\end{align*}

\noindent Hence,

\begin{align*}
    \sum_{n=1}^{\infty} \phi(2^{-n})^{-1} 4^{n} M_*^{1+\alpha}(A_n \setminus U) \geq \frac{27}{64} \sum_{n=1}^{\infty} n^{-1} = \infty,
\end{align*}

\noindent and the lemma is proved.

\end{proof}

The next lemma shows that $U$ satisfies condition (a).

\begin{lemma}
\label{a is true}
    Let $U$ be the set constructed in the paragraph before Lemma \ref{c is false}. For each $\epsilon>0$ the set $\{y\in U: \vert f(y)-f(0)-f'(0)y\vert \leq \epsilon \phi(\vert y\vert ) \vert y \vert \Vert f\Vert _{\text{Lip}_\alpha} \text{ for all } f \in A_{\alpha}(U)\}$ has full area density at $0$.
\end{lemma}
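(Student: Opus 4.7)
My plan is to bound $R_0^1 f(y) = f(y) - f(0) - f'(0) y$ by representing it as a contour integral and then carefully choosing the set of $y$ for which the bound is manageable. Using the partial fractions identity $\frac{y^2}{z^2(z-y)} = \frac{1}{z-y} - \frac{1}{z} - \frac{y}{z^2}$ together with Cauchy's integral theorem applied to $f$ on the region $V = B(0, 1/2) \setminus \bigcup_{k \geq 1}\overline{B(a_k, r_k)}$, where $f$ is analytic, the identity
\begin{equation*}
R_0^1 f(y) = \frac{1}{2\pi i}\int_{\partial B(0, 1/2)} \frac{f(z) y^2}{z^2(z-y)} dz - \sum_{k=1}^{\infty} \frac{1}{2\pi i}\int_{\partial B(a_k, r_k)} \frac{f(z) y^2}{z^2(z-y)} dz
\end{equation*}
holds for $y \in V$. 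The outer integral is bounded directly by $C|y|^2 \|f\|_{\text{Lip}_\alpha}$, and since $|y|^2 = \psi(|y|) \cdot |y|\phi(|y|)$ with $\psi(|y|) \to 0$, it is at most $\epsilon |y|\phi(|y|)\|f\|_{\text{Lip}_\alpha}$ for $|y|$ sufficiently small.

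For each inner integral $I_k$, I would exploit the fact that $h(z) := y^2/(z^2(z-y))$ is analytic on a neighborhood of $\overline{B(a_k, r_k)}$ whenever $y \notin \overline{B(a_k, r_k)}$, so $\int_{\partial B(a_k, r_k)} h(z)\, dz = 0$. Subtracting $f(a_k)$ times this vanishing integral from $I_k$ and applying the Lipschitz estimate $|f(z) - f(a_k)| \leq \|f\|_{\text{Lip}_\alpha}' r_k^\alpha$ on the contour yields
\begin{equation*}
|I_k| \leq r_k^{1+\alpha} \|f\|_{\text{Lip}_\alpha} M_k,
\end{equation*}
where $M_k = \sup_{z \in \partial B(a_k, r_k)} |h(z)|$. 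I would then define the good set $E_\epsilon = U \setminus \bigcup_{n \geq 1} B(a_n, \rho_n)$ with $\rho_n = 2^{-n}/(n\epsilon)$, verify by a geometric-series argument that $\sum_{n \geq N} \rho_n^2 = o(4^{-N})$ so that $E_\epsilon$ has full area density at $0$, and split the sum $\sum_k |I_k|$ according to whether $k < n$, $k = n$, or $k > n$ for $y \in E_\epsilon \cap A_n$.

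For $k < n$ the elementary estimate $M_k \lesssim |y|^2 2^{3k}$ holds, so after substituting $r_k^{1+\alpha} = a_k^2 k^{-1}\phi(a_k)$ and $\phi(a_k) = a_k/\psi(a_k)$ I get $|I_k| \lesssim k^{-1}\psi(a_k)^{-1} |y|^2 \|f\|_{\text{Lip}_\alpha}$; the lacunary inequality $\sum_{k<j} \psi(a_k)^{-1} < \psi(a_j)^{-1}$ from the construction, combined with splitting the sum at a threshold $n_0 \sim 1/\epsilon$, controls this contribution by $\epsilon |y|\phi(|y|)\|f\|_{\text{Lip}_\alpha}$ once $|y|$ is small. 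For $k > n$ one has $M_k \lesssim |y| 2^{2k}$, and the geometric decay $\phi(a_k) \leq 2^{-(k-n)}\phi(a_n)$ produces a bound of order $n^{-1}|y|\phi(|y|) \|f\|_{\text{Lip}_\alpha}$. For $k = n$ the constraint $|y - a_n| \geq \rho_n$ gives $M_n \lesssim 1/\rho_n$, and the choice of $\rho_n$ is tuned so that $|I_n| \leq \epsilon |y|\phi(|y|)\|f\|_{\text{Lip}_\alpha}$. The main obstacle will be the bookkeeping to make the thresholds—the constant $n_0$ in the $k<n$ case, the lower bound on $n$ in the $k>n$ case, and the choice of $\rho_n$ in the $k=n$ case—all compatible and to verify $\rho_n > 2 r_n$ for large $n$, which follows from $r_n = o(2^{-n})$ because $\phi(a_n) \to 0$.
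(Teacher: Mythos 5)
Your proposal is essentially correct and follows the same overall architecture as the paper's proof: represent $f(y)-f(0)-f'(0)y$ by the Cauchy integral formula as $\frac{1}{2\pi i}\sum_k\int_{\partial B(a_k,r_k)}\frac{y^2f(z)}{z^2(z-y)}\,dz$ (plus the outer circle), take as good set the complement of slightly enlarged disks, and split the sum at $k<n$, $k=n$, $k>n$ for $y$ in the $n$-th annulus, using exactly the two lacunarity properties $\sum_{k<j}\psi(a_k)^{-1}<\psi(a_j)^{-1}$ and $\phi(a_k)<\frac12\phi(a_{k-1})$. Two genuine differences are worth noting. First, where the paper invokes Lemma \ref{Lord} and then estimates the Lipschitz seminorm of $\frac{f(z)}{z^2(z-y)}$ on $B(a_k,r_k)$ by a lengthy product-rule computation, you subtract $f(a_k)$ times the vanishing integral of the analytic kernel and apply the ML-inequality with $|f(z)-f(a_k)|\le \Vert f\Vert_{\text{Lip}_\alpha}'r_k^{\alpha}$; this gives the same bound $r_k^{1+\alpha}M_k\Vert f\Vert_{\text{Lip}_\alpha}'$ more cheaply, at the cost of exploiting that the removed sets are disks rather than using the general content estimate. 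Second, you retain the outer-boundary integral and kill it via $|y|^2=\psi(|y|)\cdot|y|\phi(|y|)=o(|y|\phi(|y|))$; the paper's displayed Cauchy formula omits this term, so your accounting is actually the more careful one. Your exceptional set is $\epsilon$-dependent, with $\rho_n\asymp a_n/(n\epsilon)$, versus the paper's fixed $s_n=\frac17n^{-1/3}a_n$ with $\epsilon$ absorbed by letting $N\to\infty$; both are legitimate since condition (a) quantifies over $\epsilon$.

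One slip needs repair: as literally defined, $\rho_n=2^{-n}/(n\epsilon)$ exceeds $2^{-n}>a_n$ whenever $n<1/\epsilon$, so for small $\epsilon$ the ball $B(a_1,\rho_1)$ contains the whole unit disk and $E_\epsilon$ is empty. You must cap the radii, e.g. $\rho_n=\min\{2^{-n}/(n\epsilon),\,2^{-n}/8\}$. The $k=n$ estimate then only produces the factor $\epsilon$ for $n\gtrsim 1/\epsilon$, but that is harmless: full area density at $0$ is a local statement, so it suffices that $E_\epsilon\cap B(0,\delta_0)$ lies in the good set for some $\delta_0(\epsilon)>0$. You anticipate this threshold bookkeeping yourself; it just needs to be carried out explicitly.
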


\begin{figure}[h!]
    \centering
    \includegraphics[width=\linewidth]{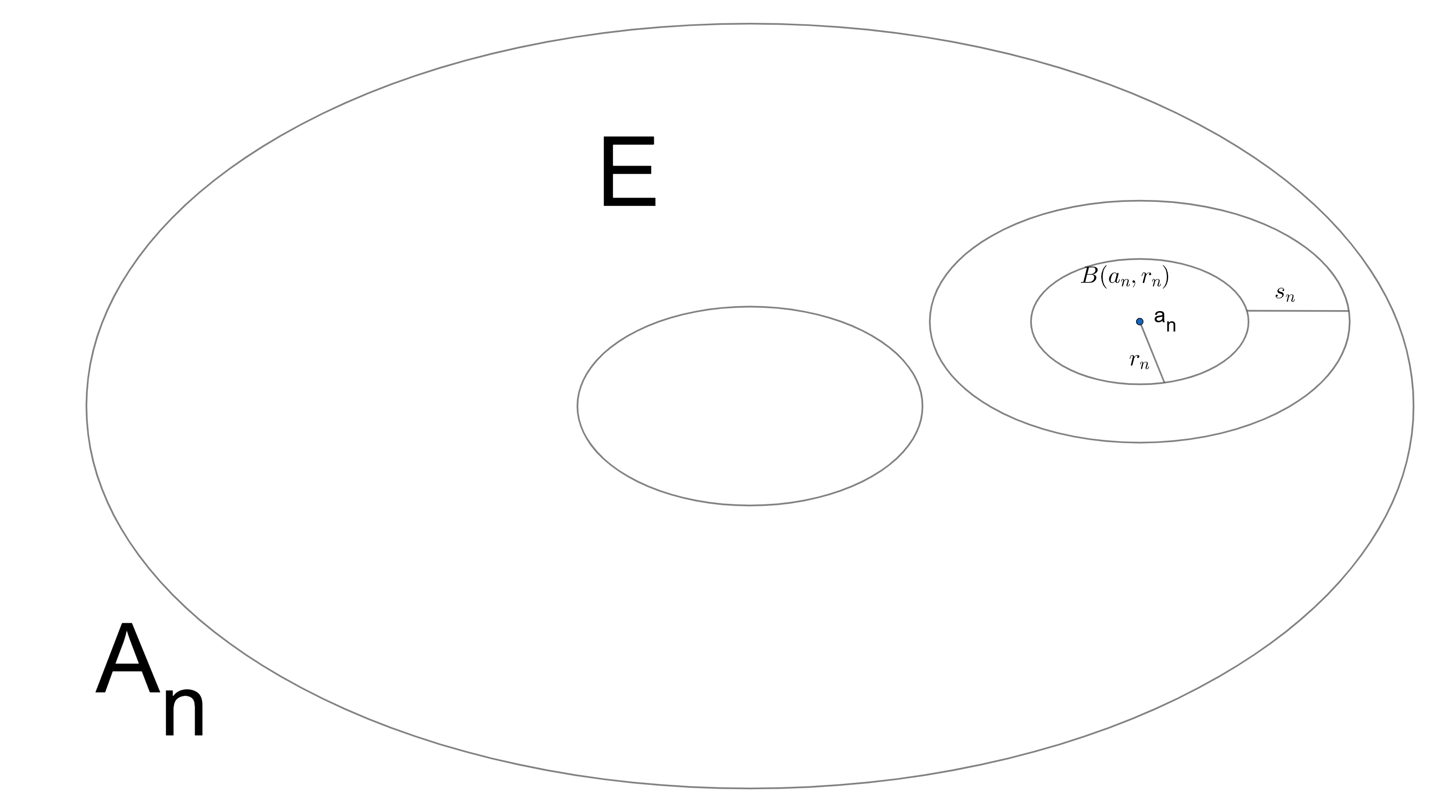}
    \caption{The portion of $E$ in the annulus $A_n$ consists of the compliment of the ball $B(a_n, r_n + s_n)$ in $A_n$.}
    \label{exceptional set}
\end{figure}

\begin{proof}

   Let $d_n(z)$ denote the distance from $z$ to $B(a_n,r_n)$, let $s_n = \frac{1}{7} n^{-1/3} a_n$, and let $E = \displaystyle \bigcup_{n=1}^{\infty} \{z \in A_n: d_n(z) \geq s_n\}$. (See Figure \ref{exceptional set}.) We claim that $E$ has full area density at $0$. To show this, we first note that since $r_n^{1+\alpha} = a_n^2 n^{-1} \phi(a_n)$,

   \begin{align*}
       \frac{r_n^{1+\alpha}}{s_n^{1+\alpha}} = 7^{1+\alpha} a_n^{1-\alpha}n^{\frac{\alpha-2}{3}}\phi(a_n) \to 0
   \end{align*}

   \noindent as $n\to \infty$ and thus $r_n < s_n$ for $n$ sufficiently large. Then, since $B(0,2^{-j}) \setminus E$ is the union of all disks $B(a_n, r_n +s_n)$, where the union is taken over all $n\geq j$ and $s_{n+1} < \frac{1}{2} s_n$, it follows that

\begin{align*}
    \frac{m(B(0,2^{-j})\setminus E)}{m(B(0,2^{-j}))} &= \frac{\pi\sum_{n=j}^{\infty} (r_n+s_n)^2}{\pi (2^{-j})^2}\\
    &<\frac{9\sum_{n=j}^{\infty} s_n^2}{4a_j^2}\\
    &<\frac{9\left(\sum_{n=0}^{\infty} 2^{-2n} \right) s_j^2}{4a_j^2}\\
    &= \frac{3}{49} j^{-2/3}.
\end{align*}

   \noindent Thus, $\displaystyle \frac{m(B(0,2^{-j})\setminus E)}{m(B(0,2^{-j}))} \to 0$ as $j \to \infty$ and $E$ has full area density at $0$.

   \bigskip

    Let $\epsilon>0$. We will show that if $y \in E$ then $|f(y)-f(0)-f'(0)y| < \epsilon \phi(|y|) |y| \Vert f \Vert_{\text{Lip}_\alpha}$ for all $f \in A_{\alpha}(U)$. Note that if we let $g(z) = f(z) -f(0) -f'(0)z$, then $g(0) = 0$, $g'(0) = 0$, and $g(y) = f(y) -f(0) - f'(0)y$. Hence, we may suppose that $f(0) = 0$ and $f'(0) = 0$. If $y \in E \cap A_N$, then it follows from the Cauchy integral formula that 

\begin{align*}
    f(y)-f(0) -f'(0)y &= \frac{1}{2 \pi i} \sum_{n=1}^{\infty} \left(\int_{\partial B(a_n,r_n)} \frac{f(z)}{z-y} dz- \int_{\partial B(a_n,r_n)} \frac{f(z)}{z}dz -y\int_{\partial B(a_n,r_n)} \frac{f(z)}{z^2}dz\right) \\
    &= \frac{y^2}{2 \pi i} \sum_{n=1}^{\infty} \int_{\partial B(a_n,r_n)} \frac{f(z)}{z^2(z-y)}.
\end{align*}

\noindent It follows from Lemma \ref{Lord} that

\begin{align*}
    |f(y)-f(0) -f'(0)y| \leq \frac{|y|^2}{2\pi} C \sum_{n=1}^{\infty} M_*^{1+\alpha} (A_n \setminus U) \left\Vert \frac{f(z)}{z^2(z-y)} \right\Vert_{\text{Lip}_{\alpha}(B(a_n,r_n))}'.
\end{align*}

\noindent  Using the triangle inequality, we obtain the following upper bound for the Lipschitz seminorm on $B(a_n, r_n)$.

\begin{align*}
    \left\Vert \frac{f(z)}{z^2(z-y)} \right\Vert_{\text{Lip}_{\alpha}(B(a_n,r_n))}' &= \sup_{z \neq w; z,w \in B(a_n,r_n)} \frac{\left| \frac{f(z)}{z^2(z-y)} - \frac{f(w)}{w^2(w-y)} \right|}{|z-w|^{\alpha}} \\
    &\leq \sup_{z \neq w; z,w \in B(a_n,r_n)} \frac{|f(z)-f(w)|}{|z-w|^{\alpha} |z|^2 |z-y|} + \sup_{z \neq w; z,w \in B(a_n,r_n)} \frac{|f(w)| |w^2(w-y) -z^2(z-y)|}{|z-w|^{\alpha} |z|^2 |z-y| |w|^2 |w-y|}\\
    & = (I) + (II)
\end{align*}

\noindent It follows directly from the definition of the Lipschitz seminorm that 

\begin{align*}
(I) \leq \left( \sup_{z \in B(a_n,r_n)}|z|^{-2} |z-y|^{-1}\right) ||f||_{\text{Lip}_{\alpha}}'.
\end{align*}

\noindent To get a bound for $(II)$, we first note that since $f(0) = 0$, 

\begin{align*}
   (II)  \leq \sup_{z \neq w; z,w \in B(a_n,r_n)} \left( \frac{ |w^2(w-y) -z^2(z-y)|}{|z-w|^{\alpha} |z|^2 |z-y| |w|^{2-\alpha} |w-y|} \right) ||f||_{\text{Lip}_{\alpha}}'.
\end{align*}

\noindent Next, a calculation shows that $w^2(w-y) -z^2(z-y) = (w-z)[w^2+w(z-y)+z(z-y)]$. Hence,

\begin{align*}
    (II) \leq \sup_{z \neq w; z,w \in B(a_n,r_n)} \left( \frac{|w-z|^{1-\alpha}}{|z|^2|z-y||w|^{-\alpha}|w-y|} + \frac{|w-z|^{1-\alpha}}{|z|^2|w|^{1-\alpha}|w-y|} + \frac{|w-z|^{1-\alpha}}{|z||w|^{2-\alpha}|w-y|} \right) ||f||_{\text{Lip}_{\alpha} }'.
\end{align*}

\noindent Now, since $z,w \in B(a_n,r_n)$, $|w-z| \leq |w|$, $\frac{1}{2} a_n \leq |z| \leq 2 a_n$, and $\frac{1}{2} a_n \leq |w| \leq 2 a_n$. Thus, 

\begin{align*}
    (I) \leq 4 a_n^{-2} d_n(y)^{-1} ||f||_{\text{Lip}_{\alpha} }'
\end{align*}

\noindent and

\begin{align*}
    (II) \leq \left( 4 a_n^{-1} d_n(y)^{-2} + 4 a_n^{-2} d_n(y)^{-1} + 4 a_n^{-2} d_n(y)^{-1}\right) ||f||_{\text{Lip}_{\alpha} }',
\end{align*}

\noindent where $d_n(y)$ denotes the distance from $y$ to $B(a_n, r_n)$. Hence,

\begin{align*}
    \left\Vert \frac{f(z)}{z^2(z-y)} \right\Vert_{\text{Lip}_{\alpha}(B(a_n,r_n))}' \leq C \left( a_n^{-2} d_n(y)^{-1} + a_n^{-1} d_n(y)^{-2} \right) ||f||_{\text{Lip}_{\alpha} }'
\end{align*}

\noindent and

\begin{align*}
    |f(y)-f(0) -f'(0)y| \leq C |y|^2 ||f||_{\text{Lip}_{\alpha} }' \sum_{n=1}^{\infty} \left( a_n^{-2} d_n(y)^{-1} + a_n^{-1} d_n(y)^{-2} \right) M_*^{1+\alpha}(A_n \setminus U).
\end{align*}

\noindent Since $A_n \setminus U = B(a_n, r_n)$,

\begin{align*}
    |f(y)-f(0) -f'(0)y| &\leq C ||f||_{\text{Lip}_{\alpha} }'\left( |y|^2\sum_{n=1}^{\infty}  a_n^{-2} d_n(y)^{-1} r_n^{1+ \alpha} + |y|^2\sum_{n=1}^{\infty}  a_n^{-1} d_n(y)^{-2}  r_n^{1+\alpha} \right)\\
    &= C ||f||_{\text{Lip}_{\alpha} }' \left( (A) + (B) \right).
\end{align*}

To obtain bounds for (A) and (B), we first obtain an estimate for $d_n(y)$. If $y \in E \cap A_N$, then $d_N(y) \geq s_N$. However, if $n < N$, then 

\begin{align*}
    d_n(y) \geq (a_n -r_n -s_n) - 2^{-N} \geq \frac{5}{7} a_n - 2^{-N} = \frac{15}{28} 2^{-n} -2^{-N} \geq \frac{1}{28} 2^{-n} = \frac{1}{21} a_n.
\end{align*}

\noindent On the other hand, if $n > N$, then

\begin{align*}
    d_n(y) \geq 2^{-(N+1)} - (a_n + r_n + s_n) \geq 2^{-(N+1)} - \frac{9}{7} a_n \geq 2^{-(N+1)} - \frac{27}{28} 2^{-n} \geq \frac{1}{28} 2^{-(N+1)} = \frac{1}{42} a_N.
\end{align*}

Now we derive an upper bound for (A). Since $y \in A_N$, 

\begin{align*}
    (A) &= |y|^2\left(\sum_{n=1}^{N-1}  a_n^{-2} d_n(y)^{-1} r_n^{1+\alpha} + a_N^{-2} d_N(y)^{-1} r_N^{1+\alpha} + \sum_{n=N+1}^{\infty}  a_n^{-2} d_n(y)^{-1} r_n^{1+\alpha}\right)\\
    &\leq C|y|^2 \left( \sum_{n=1}^{N-1}  a_n^{-3}  r_n^{1+\alpha} + a_N^{-2} s_N^{-1} r_N^{1+\alpha} + \sum_{n=N+1}^{\infty}  a_n^{-2} a_N^{-1} r_n^{1+\alpha} \right).
\end{align*}

\noindent Observe that 

\begin{align*}
    \frac{|y|}{\phi(|y|)} \leq \psi\left(\frac{4}{3}a_N\right) \leq \frac{\frac{4}{3}a_N}{\phi(a_N)} = \frac{4}{3} \psi(a_N).
\end{align*}

\noindent Hence $|y| \leq \frac{4}{3} \phi(|y|) \psi(a_N)$ and

\begin{align*}
    (A) &\leq C|y|\phi(|y|) \left( \sum_{n=1}^{N-1}  a_n^{-3}  r_n^{1+\alpha} \psi(a_N) + a_N^{-1} \phi(a_N)^{-1} s_N^{-1} r_N^{1+\alpha} + \sum_{n=N+1}^{\infty}  a_n^{-2} \phi(a_N)^{-1} r_n^{1+\alpha} \right)\\
    &= C|y|\phi(|y|) \left[ (A_I) + (A_{II}) + (A_{III}) \right].
\end{align*}

Now we show that $(A_I)$, $(A_{II})$ and $(A_{III})$ all tend to $0$ as $N \to \infty$. First, note that

\begin{align*}
    (A_I) &= \psi(a_N) \sum_{n=1}^{N-1}  \psi(a_n)^{-1} n^{-1}\\
    &\leq \psi(a_N) \sum_{n=1}^{j-1}  \psi(a_n)^{-1} + \psi(a_N) j^{-1} \sum_{n=j}^{N-1}  \psi(a_n)^{-1} \\
    &\leq \psi(a_N) \psi(a_j)^{-1} + j^{-1},
\end{align*}

\noindent where the last line follows from the property that

\begin{align*}
    \sum_{n=1}^{j-1} \psi(a_n)^{-1} < \psi(a_j)^{-1}
\end{align*}

\noindent for $1\leq j < N$. Thus, by choosing $j$ sufficiently large, it follows that $(A_I) \to 0$ as $N \to \infty$. Next, it follows from the choices of $r_n$ and $s_n$ that

\begin{align*}
    (A_{II}) &= 7 a_N^{-1} \phi(a_N)^{-1} a_N^{-1} N^{1/3} a_N^2 N^{-1} \phi(a_N)\\
    &= 7 N^{-2/3}
\end{align*}

\noindent and $(A_{II}) \to 0$ as $N \to \infty$. Finally, 

\begin{align*}
    (A_{III}) &= \phi(a_N)^{-1} \sum_{n=N+1}^{\infty} n^{-1} \phi(a_n)\\
 &< \phi(a_N)^{-1} N^{-1} \sum_{n=N+1}^{\infty}  \phi(a_n) \\
    &< \phi(a_N)^{-1} N^{-1} \sum_{n=1}^{\infty} 2^{-n} \phi(a_N)\\
    & = N^{-1},
\end{align*}

\noindent  where the third line follows from the property that $\phi(a_n) < \frac{1}{2} \phi(a_{n-1})$. Thus, $(A_{III}) \to 0$ as $N \to \infty$ and by choosing $N$ sufficiently large, we obtain the following upper bound for (A):

\begin{align*}
    (A) \leq \epsilon |y| \phi(|y|).
\end{align*}

 Now we focus on getting a bound for $(B)$. Since $y \in A_N$,

\begin{align*}
    (B) &= |y|^2 \left( \sum_{n=1}^{N-1}  a_n^{-1} d_n(y)^{-2}  r_n^{1+\alpha} + a_N^{-1} d_N(y)^{-2} r_N^{1+\alpha} + \sum_{n=N+1}^{\infty}  a_n^{-1} d_n(y)^{-2}  r_n^{1+\alpha}\right)\\
    &\leq |y|^2\left( \sum_{n=1}^{N-1}  a_n^{-3}  r_n^{1+\alpha} + a_N^{-1} s_N(y)^{-2} r_N^{1+\alpha} + \sum_{n=N+1}^{\infty}  a_n^{-1} a_N^{-2}  r_n^{1+\alpha} \right).
\end{align*}

\noindent Since $|y| \leq \frac{4}{3} \phi(|y|) \psi(a_N)$,

\begin{align*}
    (B) &\leq C|y| \phi(|y|) \left( \sum_{n=1}^{N-1}  a_n^{-3}  r_n^{1+\alpha} \psi(a_N) + \phi(a_N)^{-1} s_N(y)^{-2} r_N^{1+\alpha} + \sum_{n=N+1}^{\infty}  a_n^{-1} \phi(a_N)^{-1} a_N^{-1}  r_n^{1+\alpha} \right)\\
    &= C|y| \phi(|y|)[(B_I) + (B_{II}) + (B_{III})].
\end{align*}

\noindent Now we show that $(B_I)$, $(B_{II})$ and $(B_{III})$ all tend to $0$ as $N \to \infty$. Since $(B_I) = (A_I)$ this has already been established for $(B_I)$. Next, it follows from our choices of $r_n$ and $s_n$ that 

\begin{align*}
    (B_{II}) = 49 \phi(a_N)^{-1} a_N^{-2} N^{2/3} a_N^2 N^{-1} \phi(a_N) = 49 N^{-1/3}
\end{align*}

\noindent and hence $(B_{II}) \to 0$ as $N \to \infty$. Lastly, 

\begin{align*}
    (B_{III}) &= \phi(a_N)^{-1} a_N^{-1}  \sum_{n=N+1}^{\infty} a_n n^{-1} \phi(a_n)\\
 &< \frac{1}{2} \phi(a_N)^{-1} N^{-1} \sum_{n=N+1}^{\infty} \phi(a_n)\\
    &< \frac{1}{2} \phi(a_N)^{-1} N^{-1} \sum_{n=1}^{\infty}2^{-n} \phi(a_N) \\
    &= \frac{1}{2} N^{-1},
\end{align*}

\noindent where the third line follows from the property that $\phi(a_n) < \frac{1}{2}\phi(a_n-1)$. Thus, $(B_{III}) \to 0$ as $N \to \infty$ and hence, by choosing $N$ sufficiently large, we obtain the following upper bound for (B):

\begin{align*}
    (B) \leq \epsilon |y| \phi(|y|).
\end{align*}

Thus, it follows that if $y \in E$ and $|y|$ is sufficiently small, 

\begin{align*}
    |f(y)-f(0) -f'(0)y| \leq \epsilon |y| \phi(y) ||f||_{\text{Lip}_{\alpha}},
\end{align*}

which completes the proof.

\end{proof}

Theorem \ref{main} follows from Lemma \ref{c is false} and Lemma \ref{a is true}.

\section*{Appendix}

In this section, we present an example to illustrate that the type of construction used in the proof of Theorem \ref{main} cannot be used to disprove the conjecture that $(a)$ implies $(c)$ when $t=0$. Let $U$ be a set constructed by removing closed disks $\overline{B(a_n,r_n)}$ with centers $a_n$ and radii $r_n$ from the open unit disk; that is, let $U = \Delta \setminus \bigcup \overline{B(a_n,r_n)}$. Let $\phi(r) = r^{\beta}$ where $0 < \beta < \alpha$. We will show that every choice of radii $r_n$ such that

\begin{align*}
    \sum_{n=1}^{\infty} 2^{n(1+\beta)} r_n^{1+\alpha} = \infty
\end{align*}

\bigskip
\noindent causes $U$ to be so large that $U^C$ fails to have full area density at $0$. More precisely, for $U^C$ to have full area density at $0$, it is required that

\begin{align*}
    0 = \lim_{j\to \infty} \frac{m(B(0,2^{-j})\setminus U^C)}{m(B(0,2^{-j}))} = \lim_{j\to \infty} \frac{\pi \sum_{n=j}^{\infty} r_n^2}{\pi (2^{-j})^2}.
\end{align*}

\bigskip
\noindent Since $\displaystyle \sum_{n=j}^{\infty} r_n^2> r_j^2$, it follows that for $U^C$ to have full area density at $0$, it must be the case that $2^{2j} r_j^2 \to 0$ as $j \to \infty$. Hence, it is necessary that, for $j$ sufficiently large, $r_j < 2^{-j}$. But this implies that

\begin{align*}
    \sum_{n=1}^{\infty} 2^{n(1+\beta)} r_n^{1+\alpha} < \infty,
\end{align*}

\bigskip
\noindent and thus $U^C$ cannot have full area density at $0$.

\bibliographystyle{abbrv}
\bibliography{bibliography}

\end{document}